\setlist[enumerate, 1]{label = \roman*)}
\newcommand{\differential}{\mathrm{D}}
\DeclarePairedDelimiterXPP{\scp}[2]{{}}{\langle}{\rangle}{}{#1, #2}
\DeclarePairedDelimiterXPP{\norm}[1]{{}}{\lVert}{\rVert}{}{#1}
\DeclarePairedDelimiterXPP{\abs}[1]{{}}{\lvert}{\rvert}{}{#1}
\newcommand{\inftynorm}[1]{\norm*{#1}_{\infty}}
\newcommand{\lspace}{\mathrm{L}}
\newcommand{\bv}{\mathrm{BV}}
\newcommand{\testfunctions}{ \mathscr{D} }
\newcommand{\sobolevspace}{\mathrm{W}}
\newcommand{\distributions}{ \mathscr{D}' }
\newcommand{\cont}{\mathrm{C}}
\newcommand{\standardbasis}{e}
\newcommand{\hausdorffmeasure}{\mathcal{H}}
\newcommand{\boundary}{\partial}
\DeclarePairedDelimiterX{\setwithcondition}[2]{\lbrace}{\rbrace}{#1 : #2}
\newcommand{\closure}[1]{\overline{#1}}
\newcommand{\restr}[1]{ \vert_{ #1 } }
\newcommand{\projection}{\Pi}
\newcommand{\trace}[1]{ \mathrm{tr} \left( #1 \right) }
\newcommand{\mr}{\mathbb{R}}
\newcommand{\mn}{\mathbb{N}}
\newcommand{\mc}{\mathbb{C}}
\newcommand{\dd}{\,\mathrm{d}}
\newcommand{\sequence}[2]{ \left( #1_{#2}\right)_{ #2\in\mn } }
\newcommand{\converge}{\rightarrow}
\newcommand{\ma}{\mathbb{A}}
\newcommand{\Avariationmeasure}[1]{ \abs*{ \ma #1 } }
\newcommand{\Asymbolmap}[1]{ \ma \left[ #1 \right] }
\newcommand{\bva}{ \bv^{ \ma } }
\newcommand{\Anullspace}{ N( \ma ) }
\newcommand{\Atensor}[2]{ #1 \otimes_{ \ma } #2 }
\newtheorem{theorem}{Theorem}
\newtheorem{corollary}[theorem]{Corollary}
\newtheorem{proposition}[theorem]{Proposition}
\theoremstyle{definition}		
\newtheorem{definition}{Definition}
\newtheorem*{remark}{Remark}
\newtheorem{realremark}[theorem]{Remark}
\title{Inequalities for \texorpdfstring{$ \bva $}{BVA}--Functions}
\author{Pascal Steinke}
\begin{document}
	
	\hypersetup{pageanchor=false}
	\selectlanguage{english}
	\maketitle
	
	\begin{abstract}
		In this paper, we prove trace-type Poincaré and Sobolev inequalities for the space of functions of bounded $ \ma $-variation. 
	\end{abstract}
	
	\section{Introduction}
	
	If $ \Omega  $ is an open, bounded and connected Lipschitz domain in $ \mr^{ d } $, then the Poincaré inequality states that for $ 1 \leq p  < \infty $, there exists some constant $ C(p, \Omega ) > 0 $  such that 
	\begin{equation*}
		\norm*{ 
			u 
		}_{ \lspace^{ p } ( \Omega ) }
		\leq
		C 
		\norm*{
			\nabla u
		}_{ \lspace^{ p } ( \Omega ) }
	\end{equation*} 
	for all $  u \in \sobolevspace^{ 1, p  } ( \Omega ) $ with $ \fint_{ \Omega } u \dd x = 0 $. 
	Without any additional boundary conditions, we need to ensure that the mean value integral of $ u $ is $ 0 $, otherwise any constant function would yield a quick counterexample. 
	
	Our main goal of this paper is to generalize this inequality in two ways: By replacing the gradient on the right-hand side with a more general first order differential operator $ \ma $ and finding suitable replacements for the mean value integral. What we will end up with is firstly the inequality
	
	\begin{equation*}
		\norm{ u - \projection_{ \Omega } u }_{ \lspace^{ 1 } ( \Omega ) }
		\leq
		\Avariationmeasure{ u } ( \Omega)
	\end{equation*}
	for functions $ u \in \bva ( \Omega ) $, where $ \projection_{ \Omega } $ denotes the orthogonal projection onto the nullspace of $ \ma $, see \Cref{uniform_poincare_inequality_for_bva} for the precise statement.
	
	In a recent paper by Breit, Diening and Gmeineder \cite{trace_for_bva}, the authors showed that under the algebraic condition of $ \mc $-ellipticity, there exists a trace operator on $ \bva $ on the so called NTA-domains, which include open and bounded Lipschitz domains. Combining this with the idea of Boulkhemair and Chakib in \cite{uniform_poincare} to only subtract the trace on a given Lipschitz hypersurface  $ \Gamma \subset \closure{ \Omega } $ in the Poincaré inequality, we arrive at 
	\begin{equation*}
		\norm*{
			u - \projection_{ \Gamma } \trace{ u }
		}_{ \lspace^{ 1 } ( \Omega ) }
		\leq
		C \Avariationmeasure{ u } ( \Omega),
	\end{equation*}
	on $ \bva ( \Omega ) $ for the case $ p = 1 $ and
	\begin{equation*}
		\norm{
			u  - \projection_{ \Gamma } \trace{ u }
		}_{ \lspace^{ p } ( \Omega ) }
		\leq
		C \norm{ \ma  u }_{ \lspace^{ p } ( \Omega ) }
	\end{equation*}
	on $ \sobolevspace^{ \ma, p } ( \Omega ) $ for $ 1 < p < \infty $,
	where  $ \mathrm{ tr } $ denotes the trace  of $  u $  on $  \Gamma$ and  $ \projection_{ \boundary \Omega } $ denotes the orthogonal projection onto the nullspace of $ \ma $ restricted to $ \boundary \Omega $, see \Cref{trace_poincare_inequality_for_bva} respectively \Cref{trace_poincare_inequality_p} for the precise statements.
	The proof will use the standard functional analysis proof via a contradiction and compactness argument.
	
	One direct consequence of this inequality is the coercivity of the functional $ \mathscr{ F } \colon \sobolevspace^{ \ma , 1 } (\Omega )\to \mr $ given by
	\begin{equation*}
		\mathscr{ F } [v]
		\coloneqq
		\int_{ \Omega }
		f( x , \ma v )
		\dd x,
	\end{equation*}
	where $ f $ satisfies the linear growth condition $ f ( x , p ) \geq C \abs{ p } $ and $ v $ has to satisfy suitable boundary conditions.
	
	Lastly we will prove the Sobolev inequality
	\begin{equation*}
		\norm{
			u
		}_{ \lspace^{ d / ( d - 1 ) } ( \Omega ) }
		\leq
		C \left( 
		\Avariationmeasure{ u } ( \Omega ) 
		+
		\norm*{
			\trace{ u }
		}_{ \lspace^{ 1 } ( \boundary \Omega ) }
		\right) 
	\end{equation*}
	for functions $ u \in \bva ( \Omega ) $ via an extension theorem for $ \bva $-functions.
	
	The structure of the paper is as follows: In \cref{bva_functions_section}, we will go into more detail about $ \ma $ and record some basic properties of the space $ \bva  $, followed up by a  closer  look at the algebraic properties of $ \ma $. In \cref{inequality_chapter}, we will first prove 2 versions of the Poincaré inequality for $ \bva $, one involving the values of the function on a set of positive measure and one involving the trace. Lastly we show how we can prove a Sobolev type inequality involving the trace.
	
	\section{\texorpdfstring{$ \bva $}{BVA}--Functions}
	\label{bva_functions_section}
	
	\subsection{Definition of \texorpdfstring{$ \bva $}{BVA}}
	\begin{definition}
		\label{def_of_A}
		We call a differential operator
		$ \ma $ 
		a \textbf{constant-coefficient, linear, homogeneous first order differential operator from 
			$ \mr^{N} $ 
			to 
			$ \mr^{k}$ } 
		if there exist fixed linear maps 
		$\ma_{ j } \colon \mr^{N} \to \mr^{k}$ with
		\begin{equation*}
			\ma = \sum_{j=1}^{d} \ma_{j} \partial_{j}.
		\end{equation*}
	\end{definition}
	
	From now on, $ \ma $ will always be used in the fashion we have just defined it.
	
	\begin{definition}
		\label{bva_def}
		Let 
		$ U \subseteq \mr^{ d } $
		be an open set. 
		We define the space of
		\textbf{functions of bounded $ \ma $-variation}
		as
		\begin{equation*}
			\bva ( U )
			\coloneqq
			\{
			u \in \lspace^{ 1} ( U, \mr^{ N } ) 
			\colon
			\ma u \in \mathcal{ M } ( U , \mr^{ k } ) 
			\},
		\end{equation*}
		where 
		$ \mathcal{ M } ( U , \mr^{ k } ) $ denotes the $ \mr^{ k } $-valued Radon measures of finite total variation on $ U $.
		For $ \ma^{ \ast } = \sum_{ j = 1 }^{ d } \ma_{ j }^{ \top } \partial_{j} $, the total variation of $ \ma u $ will be denoted by 
		\begin{equation*}
			\Avariationmeasure{ u } ( U )
			=
			\sup
			\setwithcondition*
			{ \int_{U} \scp{u} { \ma^{\ast} \varphi } \dd x }
			{ 	\varphi\in C_{c}^{1} ( U , \mr^{k} ),
				\ \inftynorm{ \varphi } \leq 1 }.
		\end{equation*}
		The norm we use on $ \bva ( U ) $ is given by
		\begin{equation*}
			\norm*{ u }_{ \bva ( U, \mr^{ N } ) }
			\coloneqq
			\norm*{ u }_{ \lspace^{ 1 } ( U , \mr^{ N } ) }
			+
			\Avariationmeasure{ u } ( U ).
		\end{equation*}
	\end{definition}
	
	Two important properties of $  \bva ( U ) $ are the well-known lower semicontinuity of the variation measure and the smooth approximation with respect to the strict convergence, which can be found in \cite[Theorem~2.8]{trace_for_bva}.
	
	\subsection{Algebraic Properties of the Symbol Map }
	
	In order to save ourselves work, we try to use results which we have already seen when working with the full gradient, i.e. the Sobolev spaces or BV-spaces. 
	However, one quickly realizes that this will not be easy since $ \ma $ can, for example, only represent parts of the gradient.
	This makes it necessary to introduce some additional conditions on 
	$ \ma $.
	The section follows the paper \cite{trace_for_bva}.
	
	\begin{definition}
		Let $ \mathbb{ K } \in \{ \mr , \mc \} $. 
		The differential operator $ \ma $ is called
		\textbf{$ \mathbb{ K } $-elliptic} 
		if for every 
		$ \xi \in \mathbb{ K }^{ d } \setminus \{ 0 \} $, 
		we have that the  symbol map
		\begin{equation*}
			\ma [ \xi ]
			=
			\sum_{ j = 1 }^{ d } 
			\xi_{ j } \ma_{ j } 
			\colon \mathbb{ K }^{ N } \to \mathbb{ K }^{ k }
		\end{equation*}
		is injective. We sometimes also write $\Atensor{ v }{ \xi } \coloneqq \Asymbolmap{ \xi } ( v ) $ for $ v \in \mathbb{K}^{ N } $.
		
		We say that $ \ma $ has 
		\textbf{finite-dimensional nullspace}
		if the kernel
		$ \Anullspace $
		is finite dimensional, i.e.
		\begin{equation*}
			\dim ( \Anullspace )
			\coloneqq
			\dim \left(
			\setwithcondition*
			{ v \in \distributions \left( \mr^{ d } ,  \mr^{ N } \right) }
			{ \ma v = 0 }
			\right)
			< 
			\infty.
		\end{equation*}
	\end{definition}
	
	The fashion in which this is used is by exploiting that all norms on a finite-dimensional real vector space are equivalent. 
	This will let us switch in between norms.
	
	The proofs of the following theorems can be found in 
	\cite{trace_for_bva}.
	
	\begin{theorem}
		\label{characterization_of_c_ellipticity}
		For our differential operator $ \ma $ from before, the following are equivalent:
		\begin{enumerate}
			\item 
			\label{cellip_one}
			$ \ma $ has finite-dimensional nullspace.
			
			\item
			\label{cellip_item_two}
			$ \ma $ is $ \mc $-elliptic.
			
			\item
			\label{cellip_item_three}
			There exists some $ l \in \mn $ with 
			$ \Anullspace \subseteq \mathscr{ P }_{ l } $,
			where $ \mathscr{ P }_{ l } $ 
			denotes the space of polynomials (in $ d $ variables) with degree less or equal to $ l $.
		\end{enumerate}
	\end{theorem}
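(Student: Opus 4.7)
The plan is to prove the three equivalences as a chain. The implication (iii) $\Rightarrow$ (i) is immediate, since $\mathscr{P}_{l}$ is a finite-dimensional vector space, so the heart of the argument lies in (i) $\Rightarrow$ (ii) and (ii) $\Rightarrow$ (iii).

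For (i) $\Rightarrow$ (ii), I would argue by contraposition. Assume $\ma$ is not $\mc$-elliptic, so there exist $\xi \in \mc^{d} \setminus \{0\}$ and $v \in \mc^{N} \setminus \{0\}$ with $\Asymbolmap{\xi} v = 0$. For each $\alpha \in \mr$, the function $u_{\alpha}(x) \coloneqq \exp(i \alpha \xi \cdot x)\,v$ satisfies $\ma u_{\alpha} = i \alpha \exp(i \alpha \xi \cdot x) \Asymbolmap{\xi} v = 0$. Taking real and imaginary parts (coordinate-wise in $v = \realpart{v} + i\,\imagpart{v}$) and varying $\alpha$ over a countable set, I obtain an infinite family of real-valued distributional solutions in $\Anullspace$; their linear independence follows from inspection of asymptotic behaviour along the real or imaginary parts of $\xi$, so $\Anullspace$ is infinite-dimensional.

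For (ii) $\Rightarrow$ (iii), I would first upgrade regularity: the composition $\ma^{\ast} \ma$ is a second-order constant-coefficient operator whose principal symbol $\sum_{j,k} \ma_{j}^{\top} \ma_{k}\, \xi_{j} \xi_{k}$ acts as $\abs{\Asymbolmap{\xi} v}^{2}$ on $v$, and this is strictly positive for $\xi \in \mr^{d} \setminus \{0\}$ and $v \neq 0$ by $\mc$-ellipticity. Hence $\ma^{\ast} \ma$ is elliptic, and by constant-coefficient elliptic regularity every $u \in \Anullspace$ is real-analytic. Taking the Fourier transform of $\ma u = 0$ gives $\Asymbolmap{i \xi} \hat{u}(\xi) = 0$; since $\mc$-ellipticity makes $\Asymbolmap{i \xi}$ injective for all $\xi \neq 0$, the support of $\hat{u}$ reduces to $\{0\}$, so $u$ is a polynomial.

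The main obstacle, and the step I expect to be trickiest, is producing a \emph{uniform} degree bound $l$ that works for every $u \in \Anullspace$ simultaneously. This is genuinely algebraic: the polynomial matrix $\Asymbolmap{\xi}$ has associated complex variety $\setwithcondition{\xi \in \mc^{d}}{\Asymbolmap{\xi} \text{ not injective}} = \{0\}$ by $\mc$-ellipticity, so an effective Hilbert Nullstellensatz provides an integer $l$, depending only on $\ma$, with $\abs{\xi}^{2l}$ lying in the ideal generated (entry-wise in $v$) by the components of $\Asymbolmap{\xi} v$. Translating this back through the Fourier transform caps the order of the derivatives of $\delta_{0}$ appearing in $\hat{u}$, and hence the degree of $u$, by $l$. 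The preceding analytic steps are standard once the algebraic ingredient is in place.
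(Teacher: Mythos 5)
The paper does not prove this theorem itself but cites \cite{trace_for_bva} for it (the original characterization goes back to Smith's representation formulas), so there is no in-paper proof to compare against; I will assess the proposal on its own terms. Your chain (iii)$\Rightarrow$(i) and (i)$\Rightarrow$(ii) is sound: the contrapositive via the exponential solutions $e^{i\alpha\xi\cdot x}v$, their real and imaginary parts, and linear independence over a countable family of $\alpha$'s is a correct and standard argument. The ellipticity of $\ma^{\ast}\ma$ and consequent analyticity of elements of $\Anullspace$ is also correct (and only uses $\mr$-ellipticity).

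The gap is in (ii)$\Rightarrow$(iii). First, you take the Fourier transform of a general element of $\Anullspace$, but $\Anullspace$ is a subspace of $\distributions(\mr^d,\mr^N)$, not of tempered distributions, and real-analyticity does not imply temperedness (think $e^{x^2}$); so $\hat{u}$ is not defined and the support argument does not get off the ground. Second, the algebraic statement is not right as written: $\abs{\xi}^{2l}=\sum_j\xi_j\overline{\xi}_j$ is not a polynomial over $\mc$, and the \enquote{ideal generated entry-wise in $v$ by the components of $\Asymbolmap{\xi}v$} is ambiguous (over $\mc[\xi,v]$ its zero set contains $\{v=0\}$, not just the origin). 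The correct algebraic input is that the $N\times N$ minors $m_1,\dots,m_r$ of $\Asymbolmap{\xi}$ have $V(m_1,\dots,m_r)=\{0\}\subset\mc^d$, whence by the Nullstellensatz $\xi_j^{m}\in(m_1,\dots,m_r)$ for all $j$ and some $m$. Combined with the adjugate (Cramer) identity $\mathrm{adj}(M_i(\partial))\,(\ma u)_{I_i}=m_i(\partial)u$ for each $N\times N$ submatrix $M_i$, this shows that any $u\in\Anullspace$ satisfies $\partial_j^{m}u=0$ for every $j$; then every derivative of order $dm$ vanishes, and hypoellipticity of, say, $\Delta^{dm}$ plus Taylor's theorem gives that $u$ is a polynomial of degree at most $dm-1$. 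This route avoids the Fourier transform entirely, works directly at the level of distributions, and produces the uniform degree bound you correctly identified as the crux. The analyticity step then becomes superfluous.
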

	
	Two important examples for $ \mc $-elliptic operators are the gradient and the symmetric gradient $ 1/2 \left( \differential u + (\differential u)^{ \top } \right)$.
	
	\begin{theorem}
		\label{trace_for_bva}
		Assume that $ \ma $ is $ \mc $-elliptic. Let $ \Omega \subset \mr^{ d } $ be an open and bounded Lipschitz domain.
		Then there exists a trace operator
		\begin{equation*}
			\mathrm{ tr } 
			\colon
			\bva ( \Omega )
			\to
			\lspace^{ 1 } ( \boundary \Omega , \mr^{ N } )
		\end{equation*}
		which is continuous with respect to the strict convergence. Moreover it is the unique strictly continuous extension of the classical trace on $ \bva ( \Omega ) \cap \cont ( \closure{ \Omega } ) $.
	\end{theorem}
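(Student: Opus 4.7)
The plan is to build the trace by extending the classical restriction operator from the dense subspace $\bva(\Omega) \cap \cont(\closure{\Omega})$ to all of $\bva(\Omega)$ via strict convergence. Since the paper already records that smooth functions are strictly dense in $\bva(\Omega)$, it suffices to establish a linear boundary estimate of the form
\begin{equation*}
	\norm{ u \restr{ \boundary \Omega } }_{ \lspace^{ 1 } ( \boundary \Omega , \mr^{ N } ) }
	\leq
	C \left( \norm{ u }_{ \lspace^{ 1 } ( \Omega ) } + \Avariationmeasure{ u } ( \Omega ) \right)
\end{equation*}
for $u \in \cont^{ 1 } ( \closure{ \Omega } )$, with $C$ depending only on $\ma$ and $\Omega$. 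Once such an estimate holds, strict density and the trace estimate together force any smooth approximating sequence $(u_n)$ with $u_n \strictconvergence u$ to be Cauchy in $\lspace^{1}(\boundary\Omega)$; its limit, independent of the approximating sequence, defines $\trace{u}$, and strict continuity of the resulting map is an immediate consequence of the estimate applied to $u - u_n$. Uniqueness of the extension is automatic from density.

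To prove the trace estimate, I would use a finite covering of $\boundary\Omega$ by Lipschitz charts and a subordinate partition of unity to localize, thereby reducing to the model case $\Omega = \halfspace = \setwithcondition{x \in \mr^{ d } }{ x_{ d } > 0 }$ with $u$ compactly supported in $\closure{\halfspace}$. In the half-space, for $u \in \cont^{ 1 }$ the identity
\begin{equation*}
	u(x', 0) = -\int_{0}^{\infty} \partial_{d} \big( \chi(x_{d}) u(x', x_{d}) \big) \dd x_{d}
\end{equation*}
with a fixed cutoff $\chi$ controls $u\restr{\boundary\halfspace}$ in terms of $\norm{u}_{\lspace^{1}}$ and $\norm{\partial_{d} u}_{\lspace^{1}}$, but only the latter is not directly bounded by $\Avariationmeasure{u}(\halfspace)$. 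The bridge is the $\mc$-ellipticity hypothesis: by \Cref{characterization_of_c_ellipticity}, $\Anullspace$ is finite dimensional and contained in $\mathscr{P}_{l}$, which via a compactness/quotient argument yields a Korn--Ne\v{c}as type inequality
\begin{equation*}
	\norm{ \nabla u }_{ \lspace^{ 1 } ( \halfspace ) }
	\leq
	C \left( \norm{ u }_{ \lspace^{ 1 } ( \halfspace ) } + \Avariationmeasure{ u } ( \halfspace ) \right)
\end{equation*}
after projecting out $\Anullspace$, at least on bounded pieces. Combined with the boundary identity above, this completes the half-space estimate; gluing the local pieces with the partition of unity and transferring via the bi-Lipschitz charts gives the global estimate.

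The main obstacle is exactly this replacement of the full gradient by the incomplete information in $\ma u$. Without $\mc$-ellipticity the estimate simply fails (one cannot reconstruct $\partial_{d} u$ from $\ma u$ in general), so the heart of the argument is showing that $\mc$-ellipticity genuinely suffices, which is where the polynomial structure of $\Anullspace$ from \Cref{characterization_of_c_ellipticity} enters through Poincar\'e--Korn type inequalities adapted to $\ma$. The remaining steps -- strict density, cutoff, partition of unity, and passage to the limit -- are standard once the trace estimate is in place.
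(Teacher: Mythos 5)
The paper does not actually prove this theorem; it is quoted from Breit, Diening and Gmeineder \cite{trace_for_bva}, so there is no in-paper proof to compare against. Evaluating your sketch on its own, the overall packaging (strict density, Cauchy argument, extension by continuity, uniqueness from density) is the right frame, but the step you yourself single out as the heart of the argument is false. The proposed Korn--Ne\v{c}as inequality
\begin{equation*}
	\norm{ \nabla u }_{ \lspace^{ 1 } ( \halfspace ) }
	\leq
	C \left( \norm{ u }_{ \lspace^{ 1 } ( \halfspace ) } + \Avariationmeasure{ u } ( \halfspace ) \right)
\end{equation*}
does not hold for general $\mc$-elliptic $\ma$. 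This is Ornstein's non-inequality (sharpened by Kirchheim--Kristensen and Conti--Faraco--Maggi): already for the symmetric gradient one cannot control $\nabla u$ by $\varepsilon(u)$ in $\lspace^{1}$, and adding the zero-order term $\norm{u}_{\lspace^{1}}$ or restricting to bounded pieces does not rescue it, since a rescaling/truncation argument reduces back to the homogeneous estimate. If your inequality were true it would force $\bva(\Omega)=\bv(\Omega)$, but for the symmetric gradient $\mathrm{BD}(\Omega)\supsetneq\bv(\Omega)$. So $\mc$-ellipticity is precisely \emph{not} enough to recover the full gradient in $\lspace^{1}$, and this is exactly what makes the theorem nontrivial.

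The cited proof sidesteps gradient recovery entirely. The engine is a Poincar\'e inequality on cubes for $\mc$-elliptic operators,
\begin{equation*}
	\inf_{ \pi \in \Anullspace } \norm{ u - \pi }_{ \lspace^{ 1 } ( Q ) }
	\leq
	C\, \ell ( Q )\, \Avariationmeasure{ u } ( Q ),
\end{equation*}
whose validity hinges on the polynomial structure of $\Anullspace$ from \Cref{characterization_of_c_ellipticity}, combined with a Whitney decomposition and chains of cubes approaching the boundary (this is why the natural domain class there is NTA, which includes Lipschitz). The trace at a boundary point is obtained as the limit of cube averages along such a chain, and the $\lspace^{1}(\boundary\Omega)$ integrability of that limit comes from telescoping the cube-Poincar\'e estimate along the chains, not from a trace inequality for $\sobolevspace^{1,1}$. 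If you want to salvage your outline, replace the Korn step by this cube-level Poincar\'e estimate and the chain argument; the half-space reduction and the differentiation-in-$x_d$ identity you wrote are the wrong local model here, because they implicitly require control of $\partial_{d}u$ that $\ma u$ alone does not provide in $\lspace^{1}$.
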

	
	\begin{corollary}[Extension by zero for $ \bva $]
		\label{extension_by_zero_for_bva}
		Assume that $ \ma $ is $ \mc $-elliptic. Let $ \Omega $ be an open and bounded Lipschitz domain.
		Then for a given $ u \in \bva ( \Omega ) $, its extension by zero onto $ \mr^{ d } $ is in $ \bva( \mr^{ d } ) $ with
		\begin{equation*}
			\ma u = \ma u \restr{ \Omega } + \Atensor{ \trace{ u } }{ \nu } \hausdorffmeasure^{ d- 1 } \restr{ \boundary \Omega }.
		\end{equation*}
		Here $ \nu $ denotes the unit outer normal of $ \Omega $.
	\end{corollary}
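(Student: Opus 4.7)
The plan is to verify the claimed measure identity by testing against an arbitrary $\varphi \in \cont_{c}^{\infty}(\mr^{d}, \mr^{k})$. Since $\tilde u$, the extension of $u$ by zero, vanishes outside $\Omega$, the definition of the variation measure gives $\int_{\mr^{d}} \scp{\tilde u}{\ma^{\ast}\varphi} \dd x = \int_{\Omega} \scp{u}{\ma^{\ast}\varphi} \dd x$, so the corollary is equivalent to the generalised Gauss--Green identity
\begin{equation*}
	\int_{\Omega} \scp{u}{\ma^{\ast}\varphi} \dd x
	=
	\int_{\Omega} \scp{\varphi}{\dd \ma u}
	+
	\int_{\boundary\Omega} \scp{\varphi}{\Atensor{\trace{u}}{\nu}} \dd \hausdorffmeasure^{d-1}. \qquad (\ast)
\end{equation*}
Finiteness of the candidate measure $\ma u \restr{\Omega} + \Atensor{\trace{u}}{\nu}\, \hausdorffmeasure^{d-1}\restr{\boundary\Omega}$, needed to conclude $\tilde u \in \bva(\mr^{d})$, is immediate from $\Avariationmeasure{u}(\Omega) < \infty$ and $\trace{u} \in \lspace^{1}(\boundary\Omega, \mr^{N})$.

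To prove $(\ast)$, I would pick $u_{n} \in \cont^{\infty}(\Omega) \cap \bva(\Omega)$ with $u_{n} \strictconvergence u$ supplied by the strict smooth approximation of \cite[Theorem~2.8]{trace_for_bva}. For each $n$, insert a cutoff $\eta_{\epsilon} \in \cont_{c}^{\infty}(\Omega)$ with $\eta_{\epsilon} \nearrow 1$ on $\Omega$ whose gradient concentrates along $\boundary\Omega$ like $-\nu\, \hausdorffmeasure^{d-1}\restr{\boundary\Omega}$, and test the definition of $\Avariationmeasure{u_{n}}$ against the admissible $\eta_{\epsilon}\varphi \in \cont_{c}^{\infty}(\Omega)$. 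The product-rule identity $\ma^{\ast}(\eta_{\epsilon}\varphi) = \eta_{\epsilon}\, \ma^{\ast}\varphi + \Asymbolmap{\nabla\eta_{\epsilon}}^{\top}\varphi$ then splits the resulting equation into a bulk contribution and a boundary-concentrating one; letting $\epsilon \to 0$ yields $(\ast)$ for $u_{n}$, the second term producing the boundary integral against $\trace{u_{n}}$ via continuity of $u_{n}$ up to $\boundary\Omega$ in a measure-theoretic sense consistent with the trace.

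The passage $n \to \infty$ is then routine book-keeping. The $\lspace^{1}$-convergence $u_{n} \to u$ controls the left-hand side of $(\ast)$. The strict continuity of the trace from \Cref{trace_for_bva} yields $\trace{u_{n}} \to \trace{u}$ in $\lspace^{1}(\boundary\Omega, \mr^{N})$, which, combined with the boundedness of $\Asymbolmap{\nu}$ on $\boundary\Omega$, handles the boundary term. The main obstacle will be the bulk convergence $\int_{\Omega} \scp{\varphi}{\dd \ma u_{n}} \to \int_{\Omega} \scp{\varphi}{\dd \ma u}$: the test function $\varphi$ need not vanish on $\boundary\Omega$, so weak-$\ast$ convergence of $\ma u_{n}$ on the open set $\Omega$ does not suffice by itself. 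The total-variation convergence $\Avariationmeasure{u_{n}}(\Omega) \to \Avariationmeasure{u}(\Omega)$ inherent to strict convergence is what rescues the argument, ruling out any escape of mass of $\ma u_{n}$ to $\boundary\Omega$ in the limit and promoting the weak-$\ast$ convergence to a pairing against bounded continuous functions on $\closure{\Omega}$.
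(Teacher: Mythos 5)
The paper does not give its own proof of this corollary; it is attributed (together with \Cref{characterization_of_c_ellipticity} and \Cref{trace_for_bva}) to the reference \cite{trace_for_bva}, so there is no internal argument to compare against. Your high-level structure is the natural one: reduce to a Gauss--Green identity by pairing against $\varphi\in\cont_c^\infty(\mr^d,\mr^k)$, then approximate and pass to the limit using strict convergence. Your final paragraph also correctly identifies the delicate point in the $n\to\infty$ passage and resolves it the right way: weak-$\ast$ convergence of $\ma u_n$ on $\Omega$ alone does not allow testing against $\varphi\restr{\closure{\Omega}}\in\cont_b(\closure{\Omega})$, but the additional convergence $\Avariationmeasure{u_n}(\Omega)\to\Avariationmeasure{u}(\Omega)$ built into strict convergence upgrades it (Reshetnyak-type continuity).

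The gap is in the $\epsilon\to 0$ step. The approximants supplied by strict smooth approximation lie in $\cont^\infty(\Omega)\cap\bva(\Omega)$, not in $\cont(\closure{\Omega})$, and for such $u_n$ the boundary-concentrating term $\int_\Omega \scp{u_n}{\Asymbolmap{\nabla\eta_\epsilon}^\top\varphi}\dd x$ has no obvious limit: $u_n$ may fail to have continuous boundary values, and nothing rules out mass of $u_n$ escaping to $\boundary\Omega$ in a way that spoils the limit. Invoking ``continuity of $u_n$ up to $\boundary\Omega$ in a measure-theoretic sense consistent with the trace'' asserts exactly the conclusion you need; this normal-trace convergence is the genuine analytical content here, not a routine step, and it is essentially equivalent to what the trace theorem provides. (Note also that this step is no easier for general $u\in\bva(\Omega)$: applying the cutoff identity directly to $u$ leaves you with the same unresolved boundary term.) A cleaner route that closes the gap: \Cref{trace_for_bva} characterises $\trace{\cdot}$ as the unique strictly continuous extension of the classical restriction on $\bva(\Omega)\cap\cont(\closure{\Omega})$, which presupposes that this space is strictly dense. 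Approximate $u$ strictly by $v_n\in\bva(\Omega)\cap\cont(\closure{\Omega})$; for such $v_n$ the cutoff computation is unproblematic because the boundary-concentrating term is controlled by the uniform continuity of $v_n$ near $\boundary\Omega$ together with $\sup_\epsilon\int_\Omega\abs{\nabla\eta_\epsilon}\dd x<\infty$ (using $\chi_\Omega\in\bv(\mr^d)$), and $\trace{v_n}=v_n\restr{\boundary\Omega}$. Then your $n\to\infty$ argument goes through verbatim. One further small point: you should also record that the convergence $\sequence{v}{n}\strictconvergence u$ implies $\Avariationmeasure{\tilde v_n}(\mr^d)$ stays bounded, so that $\tilde u\in\bva(\mr^d)$ follows from lower semicontinuity even before the measure identity is isolated.
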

	
	We need a compact embedding, which has been proven in \cite[Theorem~1.1]{embeddings_for_a_weakly_differentiable_functions_on_domains}.
	
	\begin{theorem}
		\label{bva_compact_embedding}
		Assume that $ \ma $ is $ \mc $-elliptic.
		Let
		$ \Omega \subset \mr^{ d } $
		be an open and bounded Lipschitz domain. 
		Then the
		$ \lspace^{ 1 } $-closure
		of the unit ball in 
		$ \bva ( \Omega ) $
		is compact in 
		$ \lspace^{ 1 } ( \Omega ) $.
	\end{theorem}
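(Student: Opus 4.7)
The plan is to verify the Fréchet--Kolmogorov--Riesz precompactness criterion in $ \lspace^{ 1 }(\Omega) $. Given a bounded sequence $ \sequence{u}{n} $ in $ \bva(\Omega) $, the goal is to produce an $ \lspace^{ 1 }(\Omega) $-convergent subsequence by establishing $ \lspace^{ 1 } $-boundedness (immediate), tightness, and uniform translation equicontinuity.

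First I would reduce to a global problem on $ \mr^{ d } $. By \Cref{trace_for_bva}, the trace operator $ \trace{\cdot} \colon \bva(\Omega) \to \lspace^{ 1 }(\boundary\Omega) $ is a strictly continuous linear map, hence bounded in the norm topology of $ \bva(\Omega) $, and therefore $ \norm{\trace{u_{n}}}_{\lspace^{ 1 }(\boundary\Omega)} $ is uniformly bounded. \Cref{extension_by_zero_for_bva} then supplies extensions by zero $ \tilde u_{n} \in \bva(\mr^{ d }) $ with $ \support(\tilde u_{n}) \subseteq \closure{\Omega} $ and
\begin{equation*}
	\Avariationmeasure{\tilde u_{n}}(\mr^{d})
	=
	\Avariationmeasure{u_{n}}(\Omega)
	+
	\int_{\boundary\Omega}
		\abs{\Atensor{\trace{u_{n}}}{\nu}}
	\dd \hausdorffmeasure^{d-1}
	\leq
	C,
\end{equation*}
so the extended sequence is uniformly compactly supported and uniformly bounded in $ \bva(\mr^{d}) $. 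Tightness is immediate from the common support.

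For the translation estimate on $ \mr^{ d } $, I would tile $ \mr^{ d } $ by cubes $ \cube $ of side $ \delta > 0 $ and invoke a local Poincaré inequality $ \norm{v - \projection_{\cube} v}_{\lspace^{ 1 }(\cube)} \leq C \delta \Avariationmeasure{v}(\cube^{\ast}) $ on a slightly enlarged cube $ \cube^{\ast} $, where $ \projection_{\cube} $ is the $ \lspace^{ 2 } $-projection onto the nullspace $ \Anullspace $ restricted to $ \cube $ (finite-dimensional by \Cref{characterization_of_c_ellipticity}). For $ \abs h < \delta $, I would then decompose
\begin{equation*}
	\tilde u_{n}(\cdot + h) - \tilde u_{n}
	=
	\bigl[ (\tilde u_{n} - \projection_{\cube} \tilde u_{n})(\cdot + h) - (\tilde u_{n} - \projection_{\cube} \tilde u_{n}) \bigr]
	+
	\bigl[ \projection_{\cube} \tilde u_{n}(\cdot + h) - \projection_{\cube} \tilde u_{n} \bigr].
\end{equation*}
The first bracket contributes $ O(\delta \Avariationmeasure{\tilde u_{n}}(\cube^{\ast})) $ to $ \norm{\cdot}_{\lspace^{1}(\cube)} $ via the local Poincaré inequality, while the second, involving only polynomials of uniformly bounded degree, contributes $ O(\abs h / \delta \cdot \norm{\projection_{\cube} \tilde u_{n}}_{\lspace^{ 1 }(\cube)}) $ by equivalence of norms on that finite-dimensional polynomial space. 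Summing over cubes covering $ \closure{\Omega} $ and optimizing $ \delta $ against $ \abs h $ yields uniform equicontinuity.

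The main obstacle is the local Poincaré inequality in the $ \mc $-elliptic (as opposed to $ \mr $-elliptic) setting: it is not accessible from Calderón--Zygmund theory at the endpoint $ p = 1 $ and must be proved by a contradiction-and-compactness argument on the unit cube, using that $ \Anullspace $ consists of polynomials of uniformly bounded degree (\Cref{characterization_of_c_ellipticity}\,\ref{cellip_item_three}) to extract a contradictory limit and then rescaling to obtain the $ \delta $-dependence. Once this inequality is in place, Fréchet--Kolmogorov--Riesz produces an $ \lspace^{ 1 }(\mr^{ d }) $-convergent subsequence of $ \sequence{\tilde u}{n} $, whose restriction to $ \Omega $ completes the proof.
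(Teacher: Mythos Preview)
The paper does not prove this theorem at all; it is quoted from \cite[Theorem~1.1]{embeddings_for_a_weakly_differentiable_functions_on_domains} and then used as a black box in the subsequent contradiction-and-compactness arguments for the Poincar\'e inequalities (Theorems~\ref{uniform_poincare_inequality_for_bva} and~\ref{trace_poincare_inequality_for_bva}). So there is no in-paper proof to compare against.

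That said, your outline has a genuine circularity. You propose to obtain the local Poincar\'e inequality on the unit cube \enquote{by a contradiction-and-compactness argument}, but the compactness needed to run that argument is precisely the $ \lspace^{ 1 } $-precompactness of $ \bva $-bounded sets that you are trying to establish. Indeed, this is exactly how the paper itself derives its Poincar\'e inequalities \emph{from} \Cref{bva_compact_embedding}: normalise, extract an $ \lspace^{ 1 } $-convergent subsequence via the compact embedding, and reach a contradiction in the limit. You cannot feed that back into a proof of the compact embedding. A second, more delicate potential circularity lurks in your first step: you invoke \Cref{trace_for_bva} and \Cref{extension_by_zero_for_bva} to pass to $ \mr^{ d } $, but the construction of the trace in \cite{trace_for_bva} may itself rely on embedding or Poincar\'e-type input for $ \bva $, so you would need to check that those results are proved independently of what you are after.

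The route taken in the cited literature avoids both issues by working from a \emph{representation formula}: $ \mc $-ellipticity yields a convolution-type identity expressing $ u $ (modulo a polynomial in $ \Anullspace $) as a singular-integral or Riesz-potential operator applied to $ \ma u $, from which one reads off directly the translation estimate $ \norm{ u(\cdot + h) - u }_{ \lspace^{ 1 } } \lesssim \omega(\abs{h})\, \Avariationmeasure{u}(\mr^{d}) $ and hence Fr\'echet--Kolmogorov--Riesz, or alternatively a local Poincar\'e inequality with an explicit kernel bound. If you want your scheme to go through, you should replace the circular \enquote{contradiction-and-compactness} step by such a direct analytic argument for the local Poincar\'e inequality.
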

	
	\section{Poincaré inequality}
	\label{inequality_chapter}
	
	\subsection{The Projection Operator on Cubes}
	\label{intro_of_the_projection}
	
	As mentioned in the introduction, we want to explore how to generalize the Poincaré inequality 
	\begin{equation*}
		\norm*{ 
			f - \fint_{ \Omega } f ( y ) \dd y 
		}_{ \lspace^{ 1 } ( \Omega ) }
		\leq
		C 
		\norm*{
			\nabla f
		}_{ \lspace^{ 1 } ( \Omega ) }
	\end{equation*} 
	to  $ \bva ( \Omega ) $.
	We quickly see that after replacing the gradient with $ \ma $, this inequality can not hold on
	$ \bva $
	if there exists some non-constant polynomial in $ \Anullspace $. 
	
	Every non-constant element of $ \Anullspace $ can give us a quick counterexample. This leads us to an idea on how to modify this inequality in order to introduce it for $ \bva $.
	If we pick an element of $ \Anullspace $, then we should always subtract it on the left hand side.
	Moreover, the inequality has to scale, and therefore what we subtract from our function has to linearly depend on $ f $.
	Of course on the right hand side, we should replace the gradient by $ \ma $.
	
	One idea would therefore be to swap out the mean-value integral with the projection onto the nullspace $ \Anullspace $.
	This is actually consistent with the classical Poincar\'e inequality since the kernel of the gradient is spanned orthonormally by the constant function
	$ \abs*{ \Omega }^{- 1/2 } $.
	Thus if we take the $ \lspace^{ 2 } $-projection of some square-integrable function $ f $ onto the nullspace of the gradient, we have
	\begin{equation*}
		\Pi ( f ) 
		=
		\scp{ f }{ 1 }_{ \lspace^{ 2 } ( \Omega ) } \frac{ 1 }{ \abs{ \Omega } }
		=
		\fint_{ \Omega } f \dd x.
	\end{equation*}
	
	Since we want to define the projection for funtions in $ \bva ( \Omega) $, an $ \lspace^{ 1 } $-estimate is desired.
	Let $ A $ be some measurable set and denote by $ \projection_{ A }u $ the  $ \lspace^{ 2  } (A, \mr^{  N}; \mu) $-projection
	(with $ \mu  $ being the $d$-dimensional Lebesgue or  $d-1$-dimensional Hausdorff measure and $ \mu ( A ) < \infty $) onto $ \Anullspace  $.  Then
	\begin{equation*}
		\int_{ A } 
		\abs*{ 
			\projection_{ A } u 
		}^{ 2 }
		\dd \mu 
		\leq
		\int_{ A } 
		\abs*{ 
			\projection_{ A } u 
		}^{ 2 }
		\dd \mu
		+
		\int_{ A } 
		\abs*{ 
			u - \projection_{ A } u 
		}^{ 2 }
		\dd \mu
		=
		\int_{ A }
		\abs{ u }^{ 2 }
		\dd \mu.	
	\end{equation*}
	
	Assume that $ \ma $ is $ \mc $-elliptic.  Then since $ \Anullspace $ is finite-dimensional by 
	\cref{characterization_of_c_ellipticity} 
	and because all norms on a finite-dimensional vector space are equivalent, there exists some constant 
	$ C > 0 $ 
	such that
	\begin{equation*}
		\norm*{ 
			\projection_{ A } u 
		}_{ \lspace^{ \infty } ( A ) }
		\leq
		C
		\int_{ A }
		\abs*{
			\projection_{ A } u
		}
		\dd \mu
	\end{equation*}
	holds for all $ u \in \lspace^{ 2 } ( A , \mr^{ N } ; \mu ) $.
	
	This now yields the $ \lspace^{ 1 } $-estimate since for some $ \lspace^{ 2 } $-orthonormal basis $ \standardbasis_{ 1 }, \dots, \standardbasis_{ l } $ of $ \Anullspace \restr{ A } $, we deduce by using the boundedness of $\standardbasis_{ 1 }, \dots, \standardbasis_{ l } $ that
	\begin{equation}
		\label{l1_estimate_projection}
		\int_{ A }
		\abs*{
			\projection_{ A } u
		}
		\dd \mu
		=
		\int_{ A }
		\abs*{
			\sum_{ j = 1 }^{ l }
			\scp{ u }{ e_{ j } }
			e_{ j }
		}
		\dd \mu
		\leq
		C
		\int_{ A }
		\abs{ u }
		\dd \mu.
	\end{equation}
	Here the constant depends only on the orthonormal basis that we have chosen for the functions in $ \Anullspace \restr{A} $.
	We can therefore extend $ \projection_{ A } $ onto
	$ \lspace^{ 1 } ( A , \mr^{ N } ; \mu ) $ 
	such that the above $ \lspace^{ 1 } $-estimate \eqref{l1_estimate_projection} still holds.

	\subsection{Generalized Poincar\'e Inequality}
	
	The main content of this section is the following theorem. Its proof generalizes the idea of Boulkhemair and Chakib in \cite{uniform_poincare}. This idea will also be used to prove \cref{trace_poincare_inequality_for_bva}.
	
	\begin{theorem}
		\label{uniform_poincare_inequality_for_bva}
		Let $ \Omega $ be an open, bounded and connected Lipschitz domain and let $ E \subseteq \Omega $ be a measurable subset with $ \abs*{ E } > 0 $.
		If $ \ma $ is $  \mc $-elliptic, then there exists some constant $ C > 0 $ such that for all $ u \in \bva ( \Omega ) $ we have
		\begin{equation*}
			\norm*{
				u -\projection_{ E } u  
			}_{ \lspace^{1} ( \Omega ) }
			\leq
			C
			\Avariationmeasure{ u } ( \Omega ).
		\end{equation*}
	\end{theorem}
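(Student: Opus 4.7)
The plan is to argue by contradiction together with the compact embedding from \Cref{bva_compact_embedding}, imitating the classical functional-analytic proof of the Poincaré inequality. Suppose no such $C$ exists; then there is a sequence $(u_n)_{n\in\mn}$ in $\bva(\Omega)$ with
\[
	\norm{u_n - \projection_E u_n}_{\lspace^{1}(\Omega)} > n\, \Avariationmeasure{u_n}(\Omega).
\]
Setting $v_n \coloneqq (u_n - \projection_E u_n)/\norm{u_n - \projection_E u_n}_{\lspace^{1}(\Omega)}$, I obtain a sequence with $\norm{v_n}_{\lspace^{1}(\Omega)} = 1$, $\projection_E v_n = 0$ (using linearity and idempotence of $\projection_E$), and $\Avariationmeasure{v_n}(\Omega) < 1/n$, where I also use that $\ma \projection_E u_n = 0$ since $\projection_E u_n \in \Anullspace$. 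In particular $(v_n)$ is bounded in $\bva(\Omega)$.

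Next I would apply \Cref{bva_compact_embedding} to extract a subsequence (still denoted $v_n$) that converges in $\lspace^{1}(\Omega)$ to some $v$. Continuity of the $\lspace^{1}$-norm yields $\norm{v}_{\lspace^{1}(\Omega)} = 1$, while lower semicontinuity of the variation measure gives $\Avariationmeasure{v}(\Omega) \leq \liminf_n \Avariationmeasure{v_n}(\Omega) = 0$, so that $\ma v = 0$ as a distribution on $\Omega$. By $\mc$-ellipticity and \Cref{characterization_of_c_ellipticity}, the equation $\ma v = 0$ on the connected open set $\Omega$ forces $v$ to agree almost everywhere with a polynomial in $\Anullspace$, so in particular $v\restr{E} \in \Anullspace\restr{E}$.

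Finally I would use the $\lspace^{1}$-continuity of $\projection_E$ established in \eqref{l1_estimate_projection} to pass to the limit: since $v_n \to v$ in $\lspace^{1}(\Omega)$ implies convergence in $\lspace^{1}(E)$, one gets $\projection_E v_n \to \projection_E v$ in $\lspace^{1}(E)$, hence $\projection_E v = 0$. But $v\restr{E}$ already lies in the subspace onto which $\projection_E$ projects, so $\projection_E v = v\restr{E}$, forcing $v = 0$ on $E$, and then $v \equiv 0$ on $\Omega$ as a polynomial. This contradicts $\norm{v}_{\lspace^{1}(\Omega)} = 1$.

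The main obstacle in this outline is the step from $\ma v = 0$ in the weak sense on $\Omega$ to the statement that $v$ is (almost everywhere equal to) a single polynomial on $\Omega$ whose restriction to $E$ coincides with the vanishing projection; this is where $\mc$-ellipticity is used in an essential way and where one must be slightly careful to view $v$ as an element of $\Anullspace$ rather than merely as an $\lspace^{1}$-function. Once this identification is made, the chain $v = \projection_E v = \lim_n \projection_E v_n = 0$ closes the argument cleanly.
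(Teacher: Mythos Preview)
Your argument is correct and follows essentially the same route as the paper's proof: contradiction, normalize to $\|v_n\|_{\lspace^1}=1$ with $\Avariationmeasure{v_n}(\Omega)<1/n$, extract an $\lspace^1$-convergent subsequence via \Cref{bva_compact_embedding}, use lower semicontinuity to get $\ma v=0$, identify $v$ with an element of $\Anullspace$ on the connected domain, and conclude $v=0$ from $\projection_E v=0$ together with $\abs{E}>0$. The subtlety you flag---that $\ma v=0$ on the connected open set $\Omega$ forces $v$ to be (the restriction of) a single polynomial in $\Anullspace$---is exactly the point the paper addresses with a parenthetical remark that the proof of \Cref{characterization_of_c_ellipticity} carries over to connected open subsets.
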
	
	
	\begin{remark}
		By an abuse of notation, we understand $ \projection_{ E } u $ as a function on $ \Omega $ even though it is a priori only defined on $ E $. This is justified since $ \Anullspace $ only consists of polynomials and these are uniquely determined by their values on a set of positive Lebesgue measure.
	\end{remark}
	\begin{proof}
		Assume that the result is false. 
		Then there exists a sequence
		\newline
		$ \sequence{ u }{ n } \subseteq \bva ( \Omega )$
		such that for all 
		$ n \in \mn$, 
		we have
		\begin{equation*}
			\norm*{
				u_{n} - \projection_{ E } u_{ n }
			}_{\lspace^{1} ( \Omega ) } 
			>
			n \Avariationmeasure{ u_{ n } } ( \Omega ).
		\end{equation*} 
		We define 
		\begin{equation*}
			w_{ n } 
			\coloneqq 
			\frac
			{u_{ n } - \projection_{ E } u_{ n }}
			{ \norm{ u_{ n } - \projection_{ E } u_{ n } }_{ \lspace^{ 1 } ( \Omega ) } }.
		\end{equation*}
		Since
		$ \ma \projection_{ E } u_{ n } = 0 $, 
		we obtain
		\begin{equation} 
			\label{inequality_1}
			\Avariationmeasure{ w_{ n } } ( \Omega )
			< 
			\frac
			{ 1 }
			{ n } 
			\quad \text{and} \quad
			\norm* { 
				w_{ n } 
			}_{ \lspace^{1} ( \Omega , \mr^{ N } ) } = 1 
		\end{equation}
		for all 
		$ n \in \mn $.
		Moreover 
		$ \sequence{ w }{ n } $
		is a bounded sequence in
		$ \bva ( \Omega) $,
		and therefore by 
		\cref{bva_compact_embedding},
		there exists some 
		$ w \in \lspace^{ 1 } ( \Omega ) $
		and a non-relabeled subsequence
		such that
		$ w_{ n } \converge w $
		in 
		$ \lspace^{ 1 } ( \Omega ) $.
		Since the projection is continuous with respect to $ \lspace^{ 1 } ( E ) $, we obtain
		\begin{equation} 
			\label{projection_of_w_zero}
			\projection_{ E } w
			= 
			\lim_{ n \to \infty } \projection_{ E } w_{ n } 
			=
			0
		\end{equation}
		in $ \lspace^{ 1 } ( E ) $.
		
		By the lower semicontinuity of the variation measure and inequality (\ref{inequality_1}) we obtain $ \Avariationmeasure{ w } = 0  $ and thus, since $ \Omega $ is connected, we may write $ w = f \restr{ \Omega } $ for some polynomial $ f \in \Anullspace $ (the proof of \Cref{characterization_of_c_ellipticity} translates to open sets).
		Thus we get by equation \ref{projection_of_w_zero} that
		\begin{equation*}
			w 
			=
			\projection_{ E }  w
			=
			0
		\end{equation*}
		almost everywhere on
		$ E $ for the polynomial $ w $.
		Since $ \abs*{ E } > 0 $ by assumption, we obtain that
		$ w $
		has to be zero already.
		
		But this is a contradiction to the $ \lspace^{ 1 } $-convergence, namely
		\begin{equation*}
			0
			=
			\int_{ \Omega } \abs{ w } \dd x 
			=
			\lim_{ n \to \infty } \int_{ \Omega } \abs{ w_{ n } }  \dd x
			=
			1,
		\end{equation*}
		which finishes our proof.
	\end{proof}

	\subsection{Poincar\'e Inequality: Trace Style}
	
	The inequality we have obtained is already nice, but we can get an even better result with the use of our trace operator from before. 
	It turns out that functions in $ \Anullspace $ are able to \enquote{see} Lipschitz hypersurfaces as we establish in \Cref{characterization_of_r_ellipticity} below.
	Therefore, it should suffice to catch  the projection on some Lipschitz hypersurface and subtract it as before.
	Our argument earlier was based on $ E $ having positive measure, which will not work here. Consequently we have to make use of the trace operator.
	
	\begin{proposition}
		\label{characterization_of_r_ellipticity}
		Let $\ma $ be as in \Cref{def_of_A} and assume that its kernel only consists of analytic functions. Then $ \ma $ is $ \mr $-elliptic if and only if for every non-empty Lipschitz hypersurface $ \Gamma $and every $ f \in \Anullspace \setminus \{ 0 \} $, the restriction of $ f $ to $ \Gamma $ is not zero.
	\end{proposition}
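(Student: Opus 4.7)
The plan is to prove the two directions separately. For the easier direction we argue contrapositively: if $ \ma $ fails to be $ \mr $-elliptic, there exist $ \xi \in \mr^{ d } \setminus \{ 0 \} $ and $ v \in \mr^{ N } \setminus \{ 0 \} $ with $ \Asymbolmap{ \xi } v = 0 $. Setting $ f ( x ) \coloneqq \scp{ \xi }{ x } v $, a direct computation gives $ \ma f = \Asymbolmap{ \xi } v = 0 $, so $ f \in \Anullspace \setminus \{ 0 \} $, and $ f $ vanishes identically on the Lipschitz hyperplane $ \{ \scp{ \xi }{ x } = 0 \} $, producing the desired counterexample.

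For the nontrivial direction, assume $ \ma $ is $ \mr $-elliptic and let $ f \in \Anullspace $ with $ f \restr{ \Gamma } \equiv 0 $; the goal is $ f \equiv 0 $. Since $ \Gamma $ is locally a Lipschitz graph, Rademacher's theorem yields a point $ x_{ 0 } \in \Gamma $ at which $ \Gamma $ admits a tangent hyperplane with unit normal $ \xi $. After translation assume $ x_{ 0 } = 0 $, and parameterize $ \Gamma $ near $ 0 $ over $ T \coloneqq \xi^{ \perp } $ by a Lipschitz map $ \phi $ with $ \phi ( 0 ) = 0 $ and $ \phi ( u ) = u + o ( \abs{ u } ) $ as $ \abs{ u } \to 0 $. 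Expand $ f = \sum_{ j \geq m } P_{ j } $ into homogeneous components around $ 0 $, with $ P_{ m } $ the lowest-degree nonzero piece. The scaling $ \phi ( t u ) / t \to u $ combined with the homogeneity of $ P_{ m } $ yields $ P_{ m } ( \phi ( t u ) ) / t^{ m } \to P_{ m } ( u ) $ as $ t \to 0 $; together with $ f \circ \phi \equiv 0 $ this forces $ P_{ m } ( u ) = 0 $ on a neighborhood of $ 0 $ in $ T $, and by the polynomial identity theorem $ P_{ m } \restr{ T } \equiv 0 $.

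Now the algebra takes over. A polynomial vanishing on the hyperplane $ T $ is divisible by the linear form $ \scp{ \xi }{ \ph } $, so $ P_{ m } = \scp{ \xi }{ \ph } R $ with $ \deg R = m - 1 $. Because $ \ma $ lowers polynomial degree by one, $ \ma f = 0 $ forces $ \ma P_{ m } = 0 $; expanding $ \ma ( \scp{ \xi }{ \ph } R ) = \Asymbolmap{ \xi } R + \scp{ \xi }{ \ph } \ma R $ and restricting to $ T $ yields $ \Asymbolmap{ \xi } R \restr{ T } \equiv 0 $. The $ \mr $-ellipticity of $ \ma $ makes $ \Asymbolmap{ \xi } $ injective, hence $ R \restr{ T } \equiv 0 $ and $ R $ is again divisible by $ \scp{ \xi }{ \ph } $. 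Iterating this $ m $ times produces $ P_{ m } ( x ) = \scp{ \xi }{ x }^{ m } c $ for some $ c \in \mr^{ N } $, and then the identity $ \ma P_{ m } = m \scp{ \xi }{ x }^{ m - 1 } \Asymbolmap{ \xi } c = 0 $ together with injectivity of $ \Asymbolmap{ \xi } $ forces $ c = 0 $. This contradicts $ P_{ m } \neq 0 $, so every Taylor coefficient of $ f $ at $ 0 $ vanishes; analyticity of $ f $ on the connected set $ \mr^{ d } $ then gives $ f \equiv 0 $.

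The main obstacle I anticipate is extracting $ P_{ m } \restr{ T } \equiv 0 $ from the merely Lipschitz regularity of $ \Gamma $: the parameterization $ \phi $ is at most differentiable at the single point $ x_{ 0 } $, so $ f \circ \phi $ has no Taylor series of its own, and the error $ o ( \abs{ u } ) $ enters $ P_{ m } ( \phi ( t u ) ) $ through every coordinate slot. Controlling these errors uniformly enough to isolate the limit $ P_{ m } ( u ) $ is precisely why choosing $ x_{ 0 } $ as a Rademacher differentiability point is essential.
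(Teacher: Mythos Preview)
Your proof is correct; both directions go through as written. The contrapositive direction is identical to the paper's. For the forward direction, however, the paper takes a shorter and more conceptual route that you may find instructive.

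Instead of blowing up at a single Rademacher point and running the polynomial-factorization argument on the lowest homogeneous piece $P_{m}$, the paper works directly on $\Gamma$ itself. At (almost every) point $x\in\Gamma$ the tangent space of $\Gamma$ is $(d-1)$-dimensional and lies in the kernel of $\differential f(x)$, so $\mathrm{rank}\,\differential f(x)\leq 1$ and one can write $\differential f(x)=v\otimes\xi$. Then $0=\ma f(x)=\Asymbolmap{\xi}v$, and $\mr$-ellipticity forces $\differential f(x)=0$. The key observation is that each $\partial_{j}f$ is again in $\Anullspace$ and again vanishes on $\Gamma$, so one simply inducts: every derivative of $f$ vanishes on $\Gamma$, and analyticity finishes the job. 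No blow-up, no homogeneous decomposition, no divisibility argument is needed.

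What your approach buys is that it isolates all the work at a single point and reduces the problem to the algebraic statement ``a homogeneous polynomial in $\Anullspace$ vanishing on a hyperplane is zero''. That statement is interesting in its own right, and your Leibniz/factorization argument for it is essentially an unwound version of the paper's induction applied to the special case where $\Gamma$ is a flat hyperplane. The paper's approach, by contrast, never passes through the tangent hyperplane at all; it exploits the closedness of $\Anullspace$ under differentiation directly on the curved $\Gamma$, which makes the induction cleaner and sidesteps the scaling estimate you flagged as the main technical obstacle. One small point to tighten in your iteration: the identity you need at step $k$ is $k\,\Asymbolmap{\xi}R_{k}+\scp{\xi}{\ph}\,\ma R_{k}=0$ rather than a literal repetition of the first step, so ``iterating this'' deserves a line of justification.
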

	
	\begin{proof}
		First assume that $ \ma $ is $ \mr$-elliptic. Let $ \Gamma $ be a non-empty Lipschitz hypersurface and let $ f \in \Anullspace $ such that $ f = 0 $ on $ \Gamma $. Take some $ x \in \Gamma $. Then due to the implicit function theorem, we have that $ \mathrm{rank} ( \differential f (x) ) \leq 1 $. Thus we  find $ \xi \in \mr^{ d } $ and $ v \in \mr^{ N } $ such that we may write $ \differential f ( x ) = v \otimes \xi $.
		But due to $ f \in \Anullspace $, we have
		\begin{equation*}
			0 = \ma f ( x )
			=
			\sum_{ j = 1 }^{ d } \ma_{ j } ( \partial_{ x_{ j } } f ( x ) )
			=
			\sum_{ j = 1 }^{ d }
			\xi_{ j } \ma_{ j } v,
		\end{equation*}
		and thus by the $ \mr $-ellipticity of $ \ma $ obain that either $ v = 0 $ or $ \xi = 0 $. Either way, we get $ D f ( x ) = 0 $. Since this argument holds for every $ x \in \Gamma $, we get that for all $ 1 \leq j \leq d $, $ \partial_{ x_{ j } } f = 0 $ on $ \Gamma $.
		Now we use that $ \partial_{ x_{ j } } f \in \Anullspace $, and thus an induction yields that all derivatives of $ f $ are zero on $ \Gamma $. 
		Since $ f $ is analytic and $ \Gamma $ contains at least one point, we obtain $ f=0 $.
		
		On the other hand suppose that $ \ma $ is not $ \mr $-elliptic. Then we find $ \xi \in \mr^{ d } \setminus \{ 0 \} $ and $ v \in \mr^{ N } \setminus \{ 0 \} $ such that
		$
		\sum_{ j = 1 }^{ d }
		\xi_{ j } \ma_{ j } v
		= 0 .
		$
		Define $ \Gamma \coloneqq \{ x \colon \scp{ \xi }{ x } = 0 \} $ and
		$
		f(x) \coloneqq \sum_{ j = 1 }^{ d } \xi_{ j } x_{ j } v.
		$
		Then $f $ is zero on $ \Gamma $ and $ f \in \Anullspace \setminus \{ 0 \} $, which finishes the proof.
	\end{proof}
	
	\begin{realremark}
		\label{choice_of_trace}
		Let $ \Omega \subset \mr^{ d } $ be an open and bounded Lipschitz domain. 
		Let $ \omega \subset \mr^{ d }$ be  an open set such that 
		$ \Omega \cap \omega \neq \emptyset $  has Lipschitz boundary and let $ \Gamma \subseteq \boundary \omega \cap \closure{ \Omega } $ be a non-empty Lipschitz hypersurface (see \Cref{figure_for_gamma}). Then there exists a trace $ \trace{ u } $ of any $ u \in \bva ( \Omega ) $ on $ \Gamma $ simply by taking the trace of $  u  $ with respect to $ \Omega \cap \omega $ and restricting it to $ \Gamma $. Note however that in general, there is no canonical choice of such a trace if only $ \Gamma $ is given.
		
		To see this, take for example $ \Omega = ( - 1 , 1 ) $ and $ \Gamma = \{ 0 \} $ and consider $ u = \chi_{ ( 0, 1 ) } \in \bv ( \Omega ) $. Then both $ 0 $ and $ 1 $ would be plausible traces on $ \Gamma $. Thus in this example we have to decide whether we take the trace from the left or right side.
	\end{realremark}
	
	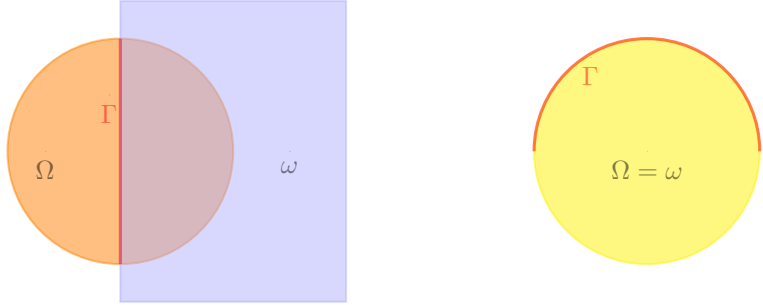
\begin{figure}
		\centering
		
		\begin{tikzpicture}[thick,  fill, opacity=0.5]
			\filldraw[orange] (0,0) circle (1.5cm);
			\filldraw[blue!30!white] (0,-2) rectangle (3,2);
			\draw[red, very thick] (0,-1.5) -- (0,1.5);
			
			\filldraw[black] (-1,0) circle (0pt)
			node[anchor=north, black] { $ \Omega $ };
			\filldraw[black] (2.25,0) circle (0pt)
			node[anchor=north, black] { $ \omega $ };
			\filldraw[black] (-0.15,0.75) circle (0pt)
			node[anchor=north, red] { $ \Gamma $ };
			
			\filldraw[yellow](7,0) circle (1.5cm);
			\draw[red, very thick] (8.5,0) arc (0:180:1.5cm);
			
			\filldraw[black] (7,0) circle (0pt)
			node[anchor=north, black] { $ \Omega = \omega $ };
			\filldraw[black] (6.25,1.25) circle (0pt)
			node[anchor=north, red] { $ \Gamma $ };
			
		\end{tikzpicture}
		\caption{The intersection of $ \boundary \omega  $  with $\closure{ \Omega } $  describes a Lipschitz hypersurface, of which we take a subset $ \Gamma $. The trace on $ \Gamma $ is taken with respect to $ \omega $.}
		\label{figure_for_gamma}
	\end{figure}
	
	After this remark, we can now state and prove our main result.
	
	\begin{theorem}
		\label{trace_poincare_inequality_for_bva}
		Let $ \Omega \subset \mr^{ d } $ be an open, bounded and connected Lipschitz domain. 
		Let $ \omega \subset \mr^{ d }$ be  an open set such that 
		$ \Omega \cap \omega \neq \emptyset $ has Lipschitz boundary and let $ \Gamma \subseteq \boundary \omega \cap \closure{ \Omega } $ be a non-empty Lipschitz hypersurface.
		Let $ \ma  $ be $ \mc $-elliptic and denote by $ \trace{ u } $ the trace of $ u $ on $ \Gamma $ as in \Cref{choice_of_trace}. Then there exists some constant $ C > 0 $ such that
		\begin{equation}
			\label{poincare_inequality_trace}
			\norm*{
				u - \projection_{ \Gamma } \trace{ u }
			}_{ \lspace^{ 1 } ( \Omega ) }
			\leq
			C \Avariationmeasure{ u } ( \Omega) 
		\end{equation}
		holds for all
		$ u \in \bva ( \Omega ) $.
	\end{theorem}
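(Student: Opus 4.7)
The plan is to adapt the contradiction-compactness argument of \Cref{uniform_poincare_inequality_for_bva} with the measurable set $E$ replaced by the hypersurface $\Gamma$ and the projection $\projection_E u$ replaced throughout by $\projection_\Gamma \trace{u}$. Suppose \eqref{poincare_inequality_trace} fails; then one finds a sequence $(u_n) \subseteq \bva(\Omega)$ with $\norm{u_n - \projection_\Gamma \trace{u_n}}_{\lspace^1(\Omega)} > n\, \Avariationmeasure{u_n}(\Omega)$. Setting
\[ w_n \coloneqq \frac{u_n - \projection_\Gamma \trace{u_n}}{\norm{u_n - \projection_\Gamma \trace{u_n}}_{\lspace^1(\Omega)}}, \]
one obtains $\norm{w_n}_{\lspace^1(\Omega)} = 1$ and $\Avariationmeasure{w_n}(\Omega) < 1/n$, using that $\projection_\Gamma \trace{u_n} \in \Anullspace$ is annihilated by $\ma$. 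By \Cref{bva_compact_embedding}, a non-relabelled subsequence converges in $\lspace^1(\Omega)$ to some $w$. Lower semicontinuity of the variation gives $\Avariationmeasure{w}(\Omega) = 0$, and connectedness of $\Omega$ together with \Cref{characterization_of_c_ellipticity} yields that $w$ agrees on $\Omega$ with some polynomial $f \in \Anullspace$.

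The main new step, and the principal obstacle, is to pass to the limit in the identity $\projection_\Gamma \trace{w_n} = 0$; this identity itself holds because $\projection_\Gamma$ acts as the identity on $\Anullspace \restr{\Gamma}$, and $\trace{\projection_\Gamma \trace{u_n}}$ is the pointwise restriction of the polynomial $\projection_\Gamma \trace{u_n}$ to $\Gamma$. The trace of \Cref{trace_for_bva} is only strictly continuous, so $\lspace^1$-convergence of $w_n$ alone is insufficient; however, strict convergence $w_n \strictconvergence w$ in $\bva(\Omega \cap \omega)$ is available. Indeed, $w_n \to w$ in $\lspace^1(\Omega)$ restricts to $\lspace^1(\Omega \cap \omega)$, and $\Avariationmeasure{w_n}(\Omega \cap \omega) \leq \Avariationmeasure{w_n}(\Omega) < 1/n$ together with $\Avariationmeasure{w}(\Omega \cap \omega) = 0$ forces convergence of the total variations. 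Thus $\trace{w_n} \to \trace{w}$ in $\lspace^1(\Gamma, \mr^N)$, and composition with the $\lspace^1$-continuous operator $\projection_\Gamma$ established in \Cref{intro_of_the_projection} yields $\projection_\Gamma \trace{w} = 0$.

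Finally, since $w$ coincides on $\Omega$ with a polynomial $f \in \Anullspace$, its trace equals the pointwise restriction $f \restr{\Gamma} \in \Anullspace \restr{\Gamma}$, and idempotency of $\projection_\Gamma$ on $\Anullspace \restr{\Gamma}$ forces $f \restr{\Gamma} = 0$. Because $\mc$-ellipticity implies both $\mr$-ellipticity and that $\Anullspace$ consists of polynomials, in particular analytic functions, \Cref{characterization_of_r_ellipticity} then forces $f \equiv 0$, so $w = 0$. This contradicts $\norm{w}_{\lspace^1(\Omega)} = \lim_n \norm{w_n}_{\lspace^1(\Omega)} = 1$, completing the proof.
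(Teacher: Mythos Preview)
Your proof is correct and follows essentially the same contradiction--compactness route as the paper's own argument. If anything, you are slightly more explicit than the paper in two places: you spell out why $\projection_\Gamma \trace{w_n}=0$ via idempotency, and you localize the strict convergence to $\Omega\cap\omega$ (where the trace on $\Gamma$ is actually defined) rather than to $\Omega$, which is the technically correct domain on which to invoke \Cref{trace_for_bva}.
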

	
	\begin{remark}
		By an abuse of notation, we understand $ \projection_{ \Gamma } \trace{u} $ as a function on $ \Omega $ even though it is a priori only defined on $ \Gamma $. This is justified by \cref{characterization_of_r_ellipticity}.
	\end{remark}
	
	\begin{proof}
		Let us assume that such a constant does not exist.
		Then there exists
		$ \sequence{ u }{ n } \in \bva ( \Omega ) $
		such that
		\begin{equation*}
			\norm*{
				u_{ n } - \projection_{ \Gamma } \trace{ u_{ n } }
			}_{ \lspace^{ 1 }  ( \Omega )  }
			>
			n
			\Avariationmeasure{ u_{ n } } ( \Omega ) .
		\end{equation*}
		Defining 
		\begin{equation*}
			w_{ n } 
			\coloneqq 
			\frac
			{ u_{ n } - \projection_{ \Gamma } \trace{ u_{ n } } }
			{ \norm*{ u_{ n } - \projection_{ \Gamma } \trace{ u_{ n } } }_{ \lspace^{ 1 } ( \Omega ) } }
		\end{equation*}
		implies
		\begin{equation*}
			\norm*{ w_{ n } }_{ \lspace^{ 1 } ( \Omega ) }
			= 
			1
			\quad
			\text{ and }
			\quad
			\Avariationmeasure{ w_{ n } } ( \Omega ) 
			<
			\frac{ 1 }{ n }.
		\end{equation*}
		By compactness of $ \bva ( \Omega ) $ (\cref{bva_compact_embedding}), 
		we therefore know that there exists some
		$ w \in \bva ( \Omega ) $
		and some non-relabeled subsequence such that
		$ w_{ n } \to w $ in 
		$ \lspace^{ 1 } ( \Omega ) $.
		By the lower semicontinuity of the variation measure and the connectedness of $ \Omega $, we have
		$ w = f\restr{ \Omega } $ for some $ f \in \Anullspace $.
		
		Moreover, we know by
		\cref{trace_for_bva} 
		that the trace is continuous with respect to the strict convergence.
		Since we already have 
		\begin{equation*}
			\Avariationmeasure{ u } ( \Omega ) 
			=
			0
			=
			\lim_{ n \to \infty }
			\Avariationmeasure{ u_{ n } } ( \Omega )
		\end{equation*}
		and $ \lspace^{ 1 } ( \Omega ) $-convergence,
		this gives together with the continuity of the projection that
		\begin{align*}
			\norm*{ 
				\projection_{ \Gamma } \trace{ w } 
			}_{ \lspace^{ 1 } ( \Gamma ) }
			& =
			\liminf_{ n \to \infty }
			\norm*{
				\projection_{ \Gamma } \left(
				\trace{ w_{ n } } - \trace{ w }
				\right)
			}_{ \lspace^{ 1 } ( \Gamma ) }
			\\
			& \leq C
			\liminf_{ n \to \infty }
			\norm*{
				\trace{ w_{ n } } - \trace{ w }
			}_{ \lspace^{ 1 } ( \Gamma ) }
			\\
			& =
			0.
		\end{align*}
		In the first equality we used that $ \projection_{ \Gamma } ( \trace{ w_{ n } } ) = 0 $.
		We know that $ w \in \Anullspace $ and therefore 
		$ 0 = \projection_{ \Gamma } \trace{ w }= \trace{ w } $.
		This gives that $ w = 0 $ $\hausdorffmeasure^{ d - 1 } $-almost everywhere on $ \Gamma $.
		But by the injectivity of the restriction (\cref{characterization_of_r_ellipticity}) we then already have $ w = 0 $ in $ \Omega $.
		This leads to a contradiction as in \cref{uniform_poincare_inequality_for_bva}.
	\end{proof}	
	
	\begin{remark}
		In the case where $ \Omega $ is a ball and $ \Gamma = \boundary \Omega $, we get via a scaling argument that the constant $ C $  depends linearly on the radius of the ball.	
	\end{remark}
	
	Looking at the proof of the previous \Cref{trace_poincare_inequality_for_bva}, we moreover obtain the following generalization for the space $ \sobolevspace^{ \ma , p } ( \Omega ) \coloneqq \{ f \in \lspace^{ p }(\Omega) \colon \ma f \in \lspace^{ p } (\Omega ) \}$.
	
	\begin{corollary}
		\label{trace_poincare_inequality_p}
		Let $ 1  < p  < \infty $ and let $ \Omega \subset \mr^{ d } $ be an open, bounded and connected Lipschitz domain. Let $ \Gamma $ be some non-empty Lipschitz hypersurface with $ \Gamma \subseteq \closure{ \Omega } $. If $ \ma $ is $ \mc $-elliptic, then there exists some constant $ C > 0 $ such that
		\begin{equation*}
			\norm{
				u  - \projection_{ \Gamma } \trace{ u }
			}_{ \lspace^{ p } ( \Omega ) }
			\leq
			C \norm{ \ma  u }_{ \lspace^{ p } ( \Omega ) }
		\end{equation*}
		holds for all $ u \in \sobolevspace^{ \ma , p } ( \Omega ) $.
	\end{corollary}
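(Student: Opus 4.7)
The plan is to mirror the contradiction argument used for \Cref{trace_poincare_inequality_for_bva}, substituting $\lspace^{p}$-norms for the variation measure throughout. Assuming the claim fails, one produces a sequence $\sequence{u}{n} \subset \sobolevspace^{\ma, p}(\Omega)$ violating the stated inequality for each $n$; the normalization
\begin{equation*}
	w_{n} \coloneqq \frac{u_{n} - \projection_{\Gamma} \trace{u_{n}}}{\norm{u_{n} - \projection_{\Gamma} \trace{u_{n}}}_{\lspace^{p}(\Omega)}}
\end{equation*}
then satisfies $\norm{w_{n}}_{\lspace^{p}(\Omega)} = 1$ and $\norm{\ma w_{n}}_{\lspace^{p}(\Omega)} < 1/n$, while the projection identity $\projection_{\Gamma} \circ \projection_{\Gamma} = \projection_{\Gamma}$ (applied after observing that $\projection_{\Gamma} \trace{u_{n}}$ is a polynomial in $\Anullspace$ whose trace on $\Gamma$ is itself) forces $\projection_{\Gamma} \trace{w_{n}} = 0$.

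The next step is to extract a strongly $\lspace^{p}$-convergent subsequence. Since $\Omega$ is bounded, Hölder's inequality gives uniform $\bva(\Omega)$-bounds together with $\Avariationmeasure{w_{n}}(\Omega) \to 0$, so \Cref{bva_compact_embedding} yields a subsequence converging in $\lspace^{1}(\Omega)$ to some $w$ with $\Avariationmeasure{w}(\Omega) = 0$; by \Cref{characterization_of_c_ellipticity} and connectedness of $\Omega$, one has $w = f \restr{\Omega}$ for some polynomial $f \in \Anullspace$. To upgrade this convergence to $\lspace^{p}(\Omega)$ one needs a Rellich-type compactness statement for $\sobolevspace^{\ma, p}$: for $\mathbb{C}$-elliptic $\ma$ this follows from the Korn-type equivalence $\norm{u}_{\sobolevspace^{1, p}(\Omega)} \leq C \bigl( \norm{u}_{\lspace^{p}(\Omega)} + \norm{\ma u}_{\lspace^{p}(\Omega)} \bigr)$ valid modulo $\Anullspace$, combined with the classical Rellich-Kondrachov theorem.

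Once strong $\lspace^{p}$-convergence $w_{n} \to w$ is secured, the conclusion proceeds exactly as in the proof of \Cref{trace_poincare_inequality_for_bva}. The sequence $w_{n}$ converges strictly to $w$ in $\bva(\Omega)$ (since $\Avariationmeasure{w_{n}}(\Omega) \to 0 = \Avariationmeasure{w}(\Omega)$), so by \Cref{trace_for_bva} the traces converge in $\lspace^{1}(\Gamma)$; the $\lspace^{1}$-continuity of $\projection_{\Gamma}$ together with $\projection_{\Gamma} \trace{w_{n}} = 0$ yields $\projection_{\Gamma} \trace{w} = 0$. Because $w \in \Anullspace$, its trace on $\Gamma$ equals $w \restr{\Gamma}$, on which $\projection_{\Gamma}$ acts as the identity, whence $w \restr{\Gamma} = 0$; \Cref{characterization_of_r_ellipticity} then forces $w = 0$, contradicting $\norm{w}_{\lspace^{p}(\Omega)} = 1$.

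The main obstacle is the $\lspace^{p}$-compactness step highlighted above: the paper records the corresponding compactness only for $\bva$, so to stay self-contained one should either prove the $\sobolevspace^{\ma, p}$-analogue directly (via the same blow-up and extension techniques underlying \Cref{bva_compact_embedding}) or invoke the Korn-Rellich reduction just described. Every other ingredient — strict continuity of the trace, the projection identities, and the injectivity of restriction onto $\Gamma$ — is already available in the preceding sections and transfers verbatim.
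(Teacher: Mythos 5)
Your proposal is correct and follows essentially the same route as the paper: a normalization-and-contradiction argument modeled on \Cref{trace_poincare_inequality_for_bva}, with the missing $\lspace^{p}$-compactness supplied by the Korn-type identification $\sobolevspace^{\ma,p}(\Omega)=\sobolevspace^{1,p}(\Omega)$ (for $1<p<\infty$ under $\mc$-ellipticity) together with Rellich--Kondrachov. The paper's proof indeed cites exactly this norm equivalence to furnish both the trace operator and the compact embedding, so the obstacle you flag is the one the paper resolves in the same way; your intermediate detour through $\lspace^{1}$-convergence and \Cref{bva_compact_embedding} is harmless but unnecessary once the $\sobolevspace^{1,p}$-equivalence is in hand.
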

	
	\begin{remark}
		Here the trace operator on $ \Gamma $ is unique since $ \sobolevspace^{ \ma, p } $-functions are not allowed to have jumps on $ d- 1 $-dimensional sets.
	\end{remark}
	
	\begin{proof}
		That the $ \lspace^{ 2 }  $-orthogonal projection onto the Nullspace of $ \ma $ can be extended to a bounded linear operator from $ \lspace^{ p } ( \boundary \Omega ) $  to $ \lspace^{ p }  ( \boundary \Omega ) $ follows with the same proof as before. The existence of a continuous trace operator and the compact embedding follow from \cite[Theorem~4.4]{sharp_trace_and_korn_inequalities_for_differential_operators}, where it has been proven that on smooth domains (the proof also works for sets with Lipschitz boundary), the space $ \sobolevspace^{ \ma , p } ( \Omega ) $ is $ \sobolevspace^{ 1, p } ( \Omega ) $ with equivalent norms for $ 1 < p < \infty $.
	\end{proof}
	
	\subsection{Sobolev Inequality: Trace Style}
	
	From the foregoing subsection, we can deduce an estimate of the form
	\begin{equation*}
		\norm*{
			u 
		}_{ \lspace^{ 1 } ( \Omega ) }
		\leq C
		\left(
		\Avariationmeasure{ u } ( \Omega ) 
		+
		\norm*{
			\trace{ u }
		}_{ \lspace^{ 1 } ( \boundary \Omega ) }
		\right).
	\end{equation*}
	This estimate however does not scale,  thus we want to explore whether or not it is possible to obtain an estimate in some better $ \lspace^{ p } $-space on the left-hand side.
	For this we first cite a suitable version of the Sobolev inequality proven in \cite{limiting_sobolev_inequalities_for_vector_field}:
	
	\begin{theorem}
		\label{gns_for_bva}
		Let $ \ma $ be $ \mr $-elliptic and cancelling. 
		Then there exists some constant
		$ C > 0 $
		such that for all
		$ u \in \testfunctions ( \mr^{ d } , \mr^{ N } ) $,
		we have
		\begin{equation}
			\label{gns_inequality}	
			\norm*{
				u
			}_{ \lspace^{ d / (d - 1 ) } ( \mr^{ d } ) }
			\leq
			C
			\norm*{
				\ma u 
			}_{ \lspace^{ 1 } ( \mr^{ d } ) }.
		\end{equation}
	\end{theorem}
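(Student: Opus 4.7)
The plan is as follows. By duality on the Lebesgue scale,
\begin{equation*}
\norm{u}_{\lspace^{d/(d-1)}(\mr^d)} = \sup \setwithcondition*{ \int_{\mr^d} \scp{u}{\varphi} \dd x }{ \varphi \in \testfunctions(\mr^d, \mr^N),\ \norm{\varphi}_{\lspace^{d}(\mr^d)} \leq 1 },
\end{equation*}
so it suffices to bound this pairing by a constant times $\norm{\ma u}_{\lspace^{1}(\mr^d)}$.

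The next step is to exploit $\mr$-ellipticity to represent $u$ in terms of $\ma u$. Injectivity of $\Asymbolmap{\xi} \colon \mr^N \to \mr^k$ for $\xi \neq 0$ makes $\Asymbolmap{\xi}^\top \Asymbolmap{\xi}$ invertible, and Fourier-inverting the equation $\ma u = f$ yields $\hat{u}(\xi) = -i ( \Asymbolmap{\xi}^\top \Asymbolmap{\xi} )^{-1} \Asymbolmap{\xi}^\top \hat{f}(\xi)$. The multiplier is homogeneous of degree $-1$ in $\xi$, hence $u = K \ast \ma u$ for a kernel $K$ which is smooth away from the origin and homogeneous of order $1-d$, so that pointwise $\abs{u(x)}$ is dominated by the Riesz potential $I_1(\abs{\ma u})(x)$. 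Hardy--Littlewood--Sobolev then yields only the weak-type bound $\lspace^1 \to \lspace^{d/(d-1),\infty}$, which is strictly weaker than what is claimed.

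Upgrading this weak bound to a strong $\lspace^{d/(d-1)}$ estimate is the core difficulty, and is precisely where the cancelling hypothesis enters. The point is that $\ma u$ is not an arbitrary $\lspace^1$-function: one has $\widehat{\ma u}(\xi) \in \Asymbolmap{\xi}(\mr^N)$ for every $\xi$, and the cancelling condition $\bigcap_{\xi \neq 0} \Asymbolmap{\xi}(\mr^N) = \{ 0 \}$ excludes exactly those frequency directions responsible for the failure of strong endpoint boundedness in the Bourgain--Brezis theory. Following Van Schaftingen, I would decompose $\varphi$ into Littlewood--Paley pieces, apply a frequency-by-frequency bound using the range constraint on $\widehat{\ma u}$, and then sum across scales in a way that uses cancelling to kill the logarithmic divergence otherwise present. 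This last step -- converting the algebraic cancelling condition into sharp analytic cancellation at the critical endpoint -- is by far the main obstacle; no soft argument suffices, and it is the heart of the cited work.
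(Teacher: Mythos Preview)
The paper does not give a proof of this theorem at all: it is stated as a citation, introduced by ``we first cite a suitable version of the Sobolev inequality proven in \cite{limiting_sobolev_inequalities_for_vector_field}''. There is therefore no proof in the paper to compare your proposal against.

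That said, your outline is an accurate high-level summary of Van Schaftingen's argument in the cited work: the Fourier left-inverse coming from $\mr$-ellipticity, the degree $-1$ homogeneous multiplier giving a convolution kernel comparable to $\abs{x}^{1-d}$, the observation that Hardy--Littlewood--Sobolev only produces the weak endpoint $\lspace^{1}\to\lspace^{d/(d-1),\infty}$, and the identification of the cancelling condition $\bigcap_{\xi\neq 0}\Asymbolmap{\xi}(\mr^N)=\{0\}$ as the mechanism that upgrades this to a strong bound. You are also candid that the actual analytic work --- turning the algebraic cancelling hypothesis into the required cancellation across Littlewood--Paley scales --- is the substantive content of the cited paper and is not something you carry out. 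As written, then, your proposal is not a self-contained proof but a correct roadmap pointing to the literature, which is exactly how the present paper treats the result as well.
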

	
	Moreover, it has been shown in \cite{embeddings_for_a_weakly_differentiable_functions_on_domains} that for space dimension $  d \geq  2  $, the assumption of  $ \mc  $-ellipticity always implies that $  \ma $ is cancelling so that we may apply \Cref{gns_for_bva} in our framework. For $ d  =  1 $, we moreover notice that if $ \ma $ is $ \mc  $-elliptic, then inequality \ref{gns_inequality} is just the standard Sobolev inequality since $ \ma = A \partial_{ x } $ for some injective matrix A.
	If we combine this with the extension result \Cref{extension_by_zero_for_bva}, we can thus prove the following.
	
	\begin{theorem}[Sobolev inequality: trace style]
		\label{sobolev_ineq_trace_style}
		Let $ \Omega \subset \mr^{ d }$ be an open and bounded Lipschitz domain and let $ \ma $ be $ \mc $-elliptic.
		Then there exists some constant $ C > 0 $ such that for all 
		$ u \in \bva ( \Omega ) $, we have
		\begin{align*}
			\SwapAboveDisplaySkip
			\norm{
				u
			}_{ \lspace^{ d / ( d - 1 ) } ( \Omega ) }
			& \leq
			C \left(
			\Avariationmeasure{ u } ( \Omega )
			+
			\norm*{
				\Atensor{ \trace{ u } }{ \nu }
			}_{ \lspace^{ 1 } ( \boundary \Omega ) }
			\right)
			\\
			& \leq C
			\left(
			\Avariationmeasure{ u } ( \Omega ) 
			+
			\norm*{
				\trace{ u }
			}_{ \lspace^{ 1 } ( \boundary \Omega ) }
			\right).
		\end{align*}
	\end{theorem}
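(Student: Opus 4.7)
The plan is to reduce the inequality on $\Omega$ to the global inequality \eqref{gns_inequality} by extending $u$ by zero to all of $\mathbb{R}^d$ and then approximating the extension by smooth, compactly supported functions via mollification.

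Concretely, I would proceed as follows. First, since $\ma$ is $\mathbb{C}$-elliptic and $\Omega$ is a bounded Lipschitz domain, \Cref{extension_by_zero_for_bva} lets me extend $u \in \bva(\Omega)$ by zero to obtain $\tilde{u} \in \bva(\mathbb{R}^d)$ with
\begin{equation*}
\ma \tilde{u} = \ma u \restr{\Omega} + \Atensor{\trace{u}}{\nu}\,\hausdorffmeasure^{d-1}\restr{\boundary \Omega}.
\end{equation*}
In particular $\tilde u$ has compact support, and taking total variations gives
\begin{equation*}
\Avariationmeasure{\tilde u}(\mathbb{R}^d) \leq \Avariationmeasure{u}(\Omega) + \norm{\Atensor{\trace{u}}{\nu}}_{\lspace^{1}(\boundary \Omega)}.
\end{equation*}

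Second, I would approximate $\tilde u$ by smooth functions. Since $\tilde u$ is compactly supported, standard mollification $\tilde u_\varepsilon \coloneqq \tilde u * \rho_\varepsilon$ yields a sequence in $\testfunctions(\mathbb{R}^d, \mathbb{R}^N)$ with $\tilde u_\varepsilon \to \tilde u$ in $\lspace^1(\mathbb{R}^d)$ and $\norm{\ma \tilde u_\varepsilon}_{\lspace^1(\mathbb{R}^d)} \to \Avariationmeasure{\tilde u}(\mathbb{R}^d)$, i.e.\ strict convergence (this is the classical fact cited after \Cref{bva_def}, applied to the global space $\bva(\mathbb{R}^d)$). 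Applying \Cref{gns_for_bva} to each $\tilde u_\varepsilon$ gives
\begin{equation*}
\norm{\tilde u_\varepsilon}_{\lspace^{d/(d-1)}(\mathbb{R}^d)} \leq C \norm{\ma \tilde u_\varepsilon}_{\lspace^{1}(\mathbb{R}^d)}.
\end{equation*}
Passing to a subsequence with pointwise a.e.\ convergence, Fatou's lemma yields
\begin{equation*}
\norm{\tilde u}_{\lspace^{d/(d-1)}(\mathbb{R}^d)} \leq \liminf_{\varepsilon \to 0} \norm{\tilde u_\varepsilon}_{\lspace^{d/(d-1)}(\mathbb{R}^d)} \leq C \Avariationmeasure{\tilde u}(\mathbb{R}^d),
\end{equation*}
and since $\tilde u = u$ on $\Omega$, combining this with the variation bound above gives the first claimed inequality. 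The second inequality is purely algebraic: $\abs{\Atensor{\trace u}{\nu}} \leq (\sum_j \norm{\ma_j})\abs{\nu}\abs{\trace u}$, so the $\lspace^1(\boundary\Omega)$-norm is controlled by $C\norm{\trace u}_{\lspace^1(\boundary\Omega)}$.

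The only subtle point is the approximation step: one needs smooth test functions on the whole space for which both the $\lspace^{d/(d-1)}$-norm can be controlled in the limit and the $\lspace^1$-norm of $\ma \tilde u_\varepsilon$ converges to the total variation $\Avariationmeasure{\tilde u}(\mathbb{R}^d)$. For $\tilde u$ compactly supported in $\mathbb{R}^d$ this is standard for mollification: $\ma(\tilde u * \rho_\varepsilon) = (\ma \tilde u) * \rho_\varepsilon$, whose $\lspace^1$-norm converges to $\Avariationmeasure{\tilde u}(\mathbb{R}^d)$ because no boundary jump is lost when mollifying on the whole space. In the $d=1$ case the target $\lspace^\infty$ and the Sobolev inequality reduce to the one stated in the remark just after \Cref{gns_for_bva}, so no separate argument is needed.
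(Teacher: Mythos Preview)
Your proof is correct and follows essentially the same approach as the paper: extend by zero via \Cref{extension_by_zero_for_bva}, approximate strictly by compactly supported smooth functions, apply \Cref{gns_for_bva}, and pass to the limit with Fatou. Your version is in fact slightly more explicit, since you identify mollification as the concrete approximation mechanism and justify why $\norm{\ma \tilde u_\varepsilon}_{\lspace^1}$ converges to the total variation, and you also spell out the elementary estimate behind the second inequality.
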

	
	\begin{proof}
		We extend $ u \in \bva ( \Omega ) $ by zero onto $ \mr^{ d } $.
		By \cref{extension_by_zero_for_bva}, we therefore have $ u \in \bva( \mr^{ d } ) $.
		It follows by the smooth approximation in the strict metric that we find a sequence of test functions
		$ \sequence{ u }{ n } \in \testfunctions ( \mr^{ d } , \mr^{ N } ) $ which converges strictly  to $ \chi_{ \Omega }  u $.
		Moreover we can pass to a non-relabeled subsequence which converges pointwise almost everywhere.
		We therefore have by the Lemma of Fatou and \Cref{gns_for_bva} that
		\begin{align*}
			\norm*{ 
				u
			}_{ \lspace^{ d ( d- 1 ) } ( \Omega ) }
			& \leq
			\liminf_{ n \to \infty }
			\norm*{
				u_{ n }
			}_{ \lspace^{ d / (d - 1) } ( \mr^{ d } ) } 
			\\
			& \leq C
			\liminf_{ n \to \infty }
			\norm*{
				\ma u_{ n }
			}_{ \lspace^{ 1 } ( \mr^{ d } ) }
			\\
			& =
			C \Avariationmeasure{ u } ( \mr^{ d } )
			\\
			& \leq
			C \left(
			\Avariationmeasure{ u } ( \Omega ) 
			+
			\norm*{
				\Atensor{ \trace{ u } }{ \nu }
			}_{ \lspace^{ 1 } ( \boundary \Omega ) }
			\right).
		\end{align*}
		The last inequality follows from \Cref{extension_by_zero_for_bva}.
		This proves the claim.
	\end{proof}
	
	\emergencystretch = 3em
	
	\nocite{continuity_points_via_riesz_potentials_for_c_elliptic_operators}
	\nocite{on_limiting_trace_inequalities_for_vectorial_differential_operators}
	\nocite{formulas_to_represent_functions_by_their_derivatives}
	\nocite{continuity_and_cancelling_operators_of_order_n_on_rn}
	\nocite{a_poincare_type_inequality_for_solutions_of_elliptic_differential_equations}
	\printbibliography
\end{document}